\theoremstyle{plain}
\newtheorem{thm}{Theorem}[section]
\newtheorem{lemma}[thm]{Lemma}
\newcommand{\eps}{\varepsilon}
\newcommand{\Z}{\mathbb{Z}}
\newcommand{\E}{\mathcal{E}}
\newcommand{\R}{\mathbb{R}}
\newcommand{\ENT}{\mathrm{Ent}}
\newcommand{\Li}{\mathrm{Li}}
\newcommand{\be}{\begin{equation}}
\newcommand{\ee}{\end{equation}}
\newcommand{\old}[1]{}
\title{Patterns in sequences}
\author{Richard Kenyon}
\begin{document}
\maketitle
\abstract{We study pattern densities in binary sequences, finding optimal limit sequences with 
fixed pattern densities.}

\section{Introduction}

Given two finite binary words $w\in\{0,1\}^m$ and $X\in\{0,1\}^n$, where $m\le n$, we let $N_w(X)$ 
be the number of times $w$ occurs as a not-necessarily-consecutive subword of $X$. 
For example $N_{10}(0100101) = 4$.
We define $\rho_w(X) := N_w(X)/\binom{n}{m},$ the \emph{density of pattern $w$ in $X$}. 
It is the probability that a random length-$m$ subsequence of $X$ is $w$. 

What is the maximum possible density $\rho_w$ for a sequence $X$ of length $n$?
What does a typical sequence $X$ with a given density $\rho_w$ look like?

Such pattern counting questions occur in several different areas of combinatorics.
For binary sequences, the simplest example, of $10$ patterns, corresponds to integer partitions:
representing $X$ as a Young diagram, its area is $N_{10}(X)$ (see Figure \ref{youngdiagram}).
\begin{figure}[htbp]
\begin{center}
\includegraphics[width=3cm]{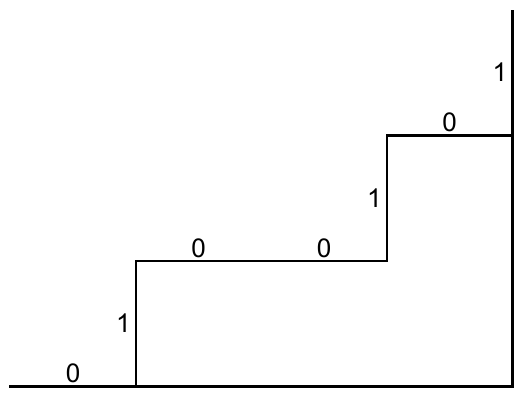}
\caption{\label{youngdiagram}A binary word can be presented as the boundary of a Young diagram: a NE lattice path where $0$ is a step east and $1$ a step north. Then $N_{10}$ is the area under the Young diagram. Here $N_{10}(0100101)=4$.}
\end{center}
\end{figure} 
Generally there are connections between pattern counting and paths in various integer Heisenberg groups. For the standard Heisenberg group generated by
$$M_0=\begin{pmatrix}1&1&0\\0&1&0\\0&0&1\end{pmatrix}~~~~M_1=\begin{pmatrix}1&0&0\\0&1&1\\0&0&1\end{pmatrix},$$
the upper right entry of a positive product is $N_{01}$. For example the upper right entry of $M_0M_1M_0M_1M_1$ is $N_{01}(01011)$. 
See Section \ref{positivity} below.
Binary pattern counting questions also turn out to be related to certain extremal probability measures on $[0,1]$, 
for example the probability measure
on $[0,1]$ that maximizes $x(y-x)$, where $x,y$ are the smaller and larger of two independent samples; see below.

Analogous pattern density questions were asked, and partially answered, in the setting of graphs with subgraph densities
in \cite{Lovasz, CD, KRRS} and in the setting of permutations and pattern densities in \cite{HKMRS, KKRW}. See also \cite{CR}
for a general framework.

One concrete instance of binary pattern counting is the ``BRBR game'':
From a standard deck of $52$ playing cards, a hand of four cards is drawn uniformly at random
from the $\binom{52}{4}$ possibilities,
keeping the cards in the same order as they were in the deck. The goal is to arrange the original deck so as to maximize the probability of 
getting BRBR, that is, first and third card black, second and fourth card red.  If the deck is randomly shuffled, the probability is 
about $1/16$ of winning. If the deck is in ``new deck order'', that is, consists in $13$ blacks followed by $13$ reds followed by $13$ blacks followed by $13$ reds, the probability is almost $70\%$ higher: about $0.1055$. How does one arrange the deck so as to maximize
the winning probability? 

The BRBR game is a special case of pattern counting: optimizing patterns $1010$.
Interestingly, one can do better than ``new deck order''; we show below that the maximum probability tends to $\frac{3}{4e^2}$ for large 
decks (which are half red and half black),
and we characterize the corresponding optimal arrangement. (For $52$-card decks the apparently optimal arrangement has winning probability $\approx0.114$, see Figure \ref{BRBR52opt}.)
\begin{figure}[htbp]
\begin{center}
\includegraphics[width=4cm]{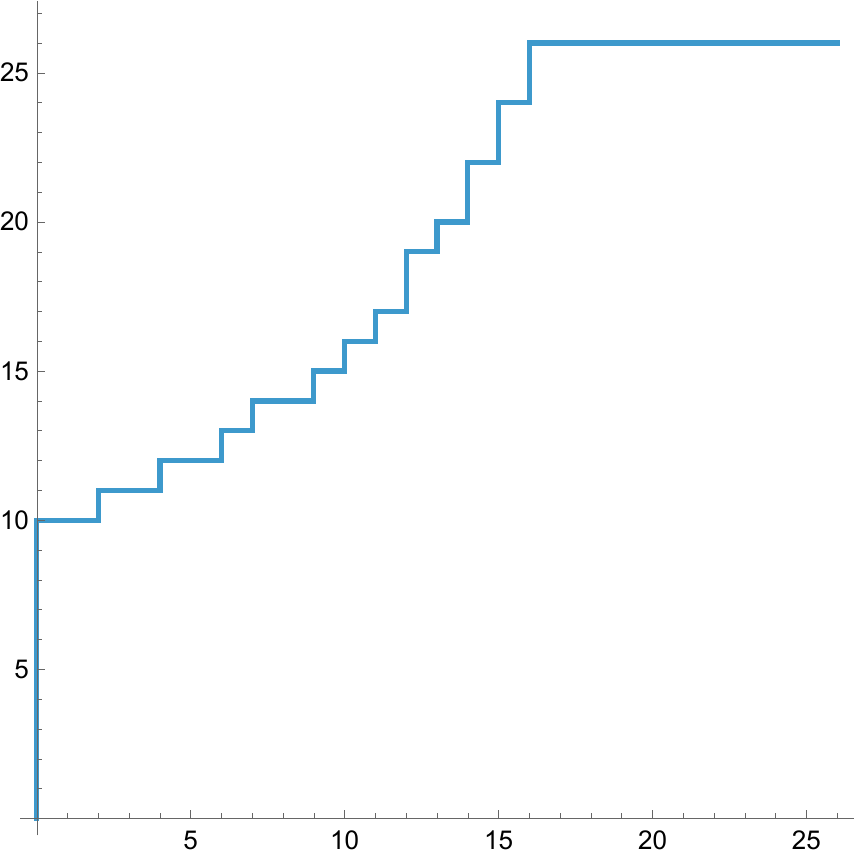}
\caption{\label{BRBR52opt}Conjecturally optimal arrangement of a $52$ card deck for the BRBR game.
Starting from the origin, a north step is a black card; an east step is a red card. This arrangement gives $p\approx0.114$. Compare with Figure \ref{F1010YD}.}
\end{center}
\end{figure} 
This example illustrates a surprising phenomenon, which is that limit shape patterns can sometimes be \emph{nonanalytic}: see Figures \ref{f1010} and \ref{F1010YD}.
This is an unusual feature of the pattern counting model, and makes the analysis challenging.
\bigskip
 
In this paper we consider the following two general pattern-counting problems. 
\begin{enumerate}
\item For a given set of patterns $w_1,\dots,w_k$, what is the ``feasible set" of allowed tuples of densities 
$(\rho_{w_1}(X),\dots,\rho_{w_k}(X))$, for $X$ of large length?
\item For large $n$, what does a typical sequence of length $n$, and having given pattern densities $\rho_{w_i}=\delta_i$,
resemble? 
\end{enumerate}

We answer the first question in the case of two patterns, $w_1,w_2$, where $w_1=1$ and $w_2=w$ is arbitrary. We answer the second question
for some sets of short patterns, and answer both questions when all patterns are of type $w_k=1^k0$. 
We also discuss the algebraic relations among pattern densities, and state a conjecture 
about the number of algebraically independent pattern densities.

The appropriate limit objects for pattern counting are called \emph{sublebesgue measures}.
A sublebesgue measure is a measure $f(x)\,dx$ on $[0,1]$ which is absolutely continuous with respect to 
Lebesgue measure and is of density bounded by $1$: $f(x)\in [0,1]$. 
We let $\Omega$ denote the set of sublebesgue measures. 
A binary sequence $X=(x_1,\dots,x_n)$ has an associated sublebesgue measure $\mu_X =f_X(x)dx\in\Omega$ where $f_X$ is the $\{0,1\}$-valued step function whose value on $[\frac{i-1}n,\frac{i}n)$ is $x_i$.

Suppose we have a sequence $\{X_n\}_{n=1,2,\dots}$ with $X_n\in\{0,1\}^n$, for which the corresponding measures $\mu_{X_n}$ converge weakly
to a sublebesgue measure $\mu(x) = f(x)dx$.
In this case for each pattern $w$ the limiting density $\lim_{n\to\infty} \rho_w(X_n)$ of $w$ exists and is a function solely of $\mu$. 
For example 
$$\rho_1(\mu) = \int_0^1f(x)\,dx = F(1)$$
where $F(y)=\mu([0,y])=\int_0^y f(x)\,dx$ is the distribution function of $\mu$.
As another example
$$\rho_{10}(\mu) = 2\iint_{0\le x<y\le 1}f(x)(1-f(y))\,dx\,dy$$
and generally for a word $w$ of length $k$, 
\be\label{genpattern}\rho_w = k!\int_{0\le x_1<\dots<x_k\le1}g(x_1)g(x_2)\dots g(x_k)dx_1\,\dots\,dx_k\ee
where $$g(x_i) =\begin{cases} f(x_i) & w_i=1\\ 1-f(x_i)&w_i=0.\end{cases}$$

This paper is organized as follows.
In Section \ref{Wass} we prove that pattern densities characterize sublebesgue measures, and conversely,
so in essence sublebesgue measures are the appropriate limit objects to discuss pattern densities,
in the same sense that graphons are the appropriate limit objects to discuss subgraph densities.
In Section \ref{algebraic} we discuss the various algebraic relations between pattern densities,
finding all relations for patterns up to length $5$, and conjecturing about the number of independent patterns
of general length.
In Section \ref{feasible} we discuss feasibility regions, and give some examples. We prove (Theorem \ref{Cthm})  that 
the feasibility region $E_{1,\tau}$ for two patterns, $\rho_1$ and $\rho_\tau$, is of the form 
$$E_{1,\tau} = \{(\rho_1,\rho_\tau)~:~\rho_1\in[0,1]~~\text{and}~~0\le\rho_\tau\le C_\tau\rho_1^k(1-\rho_1)^l\}$$
for a constant $C_\tau$ independent of $\rho_1$. Here $k,l$ are the number of $1$s and $0$s, respectively, in $\tau$.
We also analyze the BRBR game in this section.
In Section \ref{variational} we discuss limit shapes of sequences with fixed pattern densities. In Section \ref{1k0} we discuss the
case of patterns $1^k0$, finding explicit limit shapes. 
Finally in Section \ref{positivity} we discuss the relation with upper triangular matrices.

\bigskip
\noindent{\bf Acknowledgments.} This work was partially supported by the Simons foundation grant 327929. We thanks Pete Winkler for discussions which initiated this work.
We thank Pavel Galashin for discussions and pointing out the relation with total
positivity. We thank Mei Yin and Persi Diaconis for comments and suggestions.

\section{Pattern densities and sublebesgue measures}\label{Wass}

The natural notion of distance on the space of sublebesgue measures is the 
Wasserstein distance, or equivalently the $L^1$ metric on the distribution functions:
$$d_W(\mu_1,\mu_2) = \int_0^1|F_1(x)-F_2(x)|\,dx$$
where $F_i(x) = \mu_i([0,x])$ is the distribution function of $\mu_i$. The corresponding 
metric topology is also the topology of weak convergence of the measures. 

\begin{thm} Two sublebesgue measures are a.e.\! equal if and only if they have the same pattern densities. More generally,
a sequence of sublebesgue measures converges in metric $d_W$ if and only if its set of pattern densities converge. 
\end{thm}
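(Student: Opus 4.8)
The plan is to split the biconditional into two assertions and reduce the second to the first. Write (i) for the claim that two sublebesgue measures with identical pattern densities agree a.e., and (ii) for the claim that a sequence converges in $d_W$ if and only if all its pattern densities converge. Since a sublebesgue measure is determined a.e.\ by its distribution function $F$ (indeed $f=F'$ a.e.), and $F$ is in turn determined by the moments $\int_0^1 f(y)\,y^j\,dy$, the core of (i) is to recover these moments from the pattern densities. The key identity is that the length-$k$ pattern densities already determine, for each $i\in\{1,\dots,k\}$, the quantity $\int_{0\le x_1<\dots<x_k\le 1}f(x_i)\,dx$. Writing $g_1=f$, $g_0=1-f$ and using $f=g_1$ together with $g_0+g_1=1$ at every coordinate other than $i$, one gets $f(x_i)=\sum_{w:\,w_i=1}\prod_j g_{w_j}(x_j)$, so that
\[
\int_{0\le x_1<\dots<x_k\le 1} f(x_i)\,dx \;=\; \frac{1}{k!}\sum_{w:\,w_i=1}\rho_w .
\]
Integrating out the remaining coordinates over the ordered simplex on either side of $x_i=y$ evaluates the left side as $\int_0^1 f(y)\,\frac{y^{i-1}(1-y)^{k-i}}{(i-1)!\,(k-i)!}\,dy$. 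Thus the pattern densities determine $\int_0^1 f(y)\,y^{i-1}(1-y)^{k-i}\,dy$ for all $1\le i\le k$; these are scaled Bernstein basis polynomials, which span all polynomials of degree $\le k-1$, so letting $k\to\infty$ recovers every moment $\int_0^1 f(y)\,y^j\,dy$. By the Hausdorff moment problem the measure $f\,dy$ is then determined, proving (i) and hence the first sentence of the theorem.

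For the forward direction of (ii) I would prove that each $\rho_w$ is continuous in $d_W$. The obvious telescoping estimate bounds the difference by $\int|f_n-f|$, which $d_W$ does \emph{not} control (it controls only $\int|F_n-F|$), so instead I rewrite the density as an integral against a product measure: with $\eta_1=\mu$ and $\eta_0=\mathrm{Leb}-\mu$, both nonnegative, one has
\[
\rho_w(\mu) \;=\; k!\int_{[0,1]^k}\mathbf{1}_{\{x_1<\dots<x_k\}}\;\prod_{i=1}^{k} d\eta_{w_i}(x_i).
\]
Since $d_W$-convergence $\mu_n\to\mu$ is weak convergence, it gives $\eta_0^{(n)}=\mathrm{Leb}-\mu_n\to\eta_0$ weakly and hence weak convergence of the product measures $\bigotimes_i\eta_{w_i}^{(n)}\to\bigotimes_i\eta_{w_i}$ on $[0,1]^k$. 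The integrand is bounded, and its discontinuity set lies in $\bigcup_{i<j}\{x_i=x_j\}$, which has zero measure under the limiting product because every sublebesgue measure is nonatomic; the portmanteau theorem then yields $\rho_w(\mu_n)\to\rho_w(\mu)$. (This nonatomicity, i.e.\ absolute continuity, is exactly what makes the sublebesgue setting behave well.)

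For the converse direction of (ii) I would use compactness. The distribution functions $F_n$ are $1$-Lipschitz with $F_n(0)=0$ and values in $[0,1]$, so by Arzel\`a--Ascoli the family is precompact in $C([0,1])$, any uniform limit is again $1$-Lipschitz and hence the distribution function of a sublebesgue measure, and uniform convergence gives $d_W$-convergence; thus $\Omega$ is $d_W$-compact. Now if all pattern densities of $(\mu_n)$ converge, let $\mu,\mu'$ be any two subsequential $d_W$-limits. By the continuity established above, $\rho_w(\mu)=\lim_n\rho_w(\mu_n)=\rho_w(\mu')$ for every $w$, so by (i) we get $\mu=\mu'$ a.e. All subsequential limits therefore coincide, and compactness forces the full sequence to converge in $d_W$. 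The one genuinely delicate point in the whole argument is the continuity step of the second paragraph: the naive bound fails, and circumventing it requires both the product-measure reformulation and the nonatomicity of sublebesgue measures. By contrast, the moment-recovery of (i), though it is the conceptual heart of uniqueness, becomes routine once the Bernstein-basis identity is in hand.
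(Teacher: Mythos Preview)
Your argument is correct and follows the same strategy as the paper: recover the moments of $f\,dx$ from the pattern densities and invoke the Hausdorff moment theorem. The paper does this in one line via the identity $m_n=\frac{1}{n+1}\sum_{w\in\{0,1\}^n}\rho_{w1}$, which is exactly the $i=k$ case of your Bernstein identity; so your detour through the Bernstein basis is unnecessary, though not wrong. Where you genuinely add value is in the continuity direction: the paper simply asserts that weak convergence of $f_n\,dx$ forces convergence of the pattern integrals, while you correctly observe that $d_W$ does not control $\int|f_n-f|$ and supply the product-measure/portmanteau argument using nonatomicity of sublebesgue measures, together with the compactness-plus-uniqueness step for the converse sequence statement. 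These details are real and the paper glosses over them, so your write-up is the more complete of the two.
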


\begin{proof} From (\ref{genpattern}) each pattern density is an integral of the density $f$, and so if a sequence of densities converge weakly then the pattern densities converge. 
Conversely, the Hausdorff Moment Theorem \cite{Hausdorff} states that a sublebesgue measure $f(x)dx$ is determined by its moments
$m_n = \int x^n f(x)\,dx$, and, using (\ref{genpattern}) again, each moment $m_n$ is a finite sum of pattern densities
$m_n = \frac1{n+1}\sum_{w\in\{0,1\}^n}\rho_{w1}$. 
\end{proof}

\section{Algebraic relations}\label{algebraic}
There are many algebraic relations among the $N_\tau$'s.
For strings $\tau$ of length $1$, for example, we have, for a string $X$ of length $n$,
$$N_0(X)+N_1(X)=n.$$
For $\tau$ of length $2$ we have the relations
\begin{align*}
N_{00}&=\binom{N_0}{2}\\
N_{01}+N_{10}&=N_0N_1\\
N_{11}&=\binom{N_1}{2}.
\end{align*}
The middle equation, for example, says that for every choice of a $0$ and a $1$ in $X$, either the $0$ precedes the $1$ or the $1$ precedes the $0$. 
These four equations have the consequence that, given $n$, among strings $\tau$ of length $\le2$ there are only two algebraically independent pattern-counting quantities, for example $\{N_1,N_{10}\}$. 

For patterns of length $3$ we have similar linear relations
\begin{align}
\nonumber N_{000}&=\binom{N_0}{3}\\
\label{3linear}N_{001}+N_{010}+N_{100}&=\binom{N_0}2N_1\\
\nonumber N_{110}+N_{101}+N_{011}&=N_0\binom{N_1}2\\
\nonumber N_{111}&=\binom{N_1}{3}.
\end{align}
There are four other relations
\begin{align}
\label{N0N10}N_0N_{10}&= N_{010} + 2N_{100}+N_{10}\\
N_0N_{01}&= N_{010}+2N_{001}+N_{01}
\end{align}
and their complements (switching $0$ and $1$). The equation (\ref{N0N10}), for example, is obtained by taking an instance of a $10$ pattern in $X$ and a $0$ in $X$ and considering the $4$ ways the $0$ can be in relation to the $10$ pattern: to its left, between the two,
to its right, or overtop the $0$. 

Beyond the strings of length $1$ and $2$ these $8$ relations yield $2$ more algebraically independent quantities, for example
$N_{100}$ and $N_{110}$. To see that $N_1,N_{10}, N_{100}$ and $N_{110}$ are algebraically independent,
note that the strings $01110001$ and $10100110$ have the same $N_1,N_{10}$ and $N_{100}$ but different $N_{110}$.

Going further, for length $4$ we have the $5$ linear relations analogous to (\ref{3linear})
$$\sum_{x_i\in\{0,1\}: \sum x_i=k} N_{x_1x_2x_3x_4}= \binom{N_0}{4-k}\binom{N_1}{k}.$$
\old{
\begin{align*}
N_{0000}&=\binom{N_0}{4}\\
N_{0001}+N_{0010}+N_{0100}+N_{1000}&=\binom{N_0}{3}N_1\\
N_{0011}+N_{0101}+N_{0110}+N_{1001}+N_{1010}+N_{1100}&=\binom{N_0}2\binom{N_1}2\\
N_{0111}+N_{1011}+N_{1101}+N_{1110}&=N_0\binom{N_1}{3}\\
N_{1111}&=\binom{N_1}{4}.
\end{align*}
}

Among the four quantities $N_{0001},N_{0010},N_{0100},N_{1000}$ we have further relations
\begin{align*}
N_0N_{001}&=3N_{0001}+N_{0010}+2N_{001}\\
N_0N_{010}&=2N_{0010}+2N_{0100}+2N_{010}\\
N_0N_{100}&=3N_{1000}+N_{0100}+2N_{100}\\
\end{align*}
so there is at most one algebraically independent new one, which we can take for the moment to be $N_{1000}$.
Likewise among $N_{0111},N_{1011},N_{1101},N_{1110}$ there is (at most) one new algebraically independent one,
which we can take to be $N_{1110}$. 

For the six remaining quantities $N_{0011},\dots,N_{1100}$, we have relations
\begin{align*}
N_0N_{011}&=2N_{0011}+N_{0101}+N_{0110}+N_{011}\\
N_0N_{101}&=N_{0101}+2N_{1001}+N_{1010}+N_{101}\\
N_0N_{110}&=N_{0110}+N_{1010}+2N_{1100}+N_{110}
\end{align*}
and three more switching $0$ and $1$. 
Furthermore
$$N_{10}^2=2N_{1010}+4N_{1100}+2N_{110}+2N_{100}+N_{10}.$$
Combining these we find
$$N_{1001}=N_1N_{100}+N_{110}-\binom{N_{10}}2,$$
which is a surprising case where a length-$4$ pattern is an algebraic function of patterns of length strictly less than $4$.
In all we find, modulo patterns of length $\le3$, at most three algebraically independent length $4$ patterns,
which we take to be $N_{1000},N_{1100},N_{1110}$.

We now claim that $N_1,N_{10},N_{100},N_{110},N_{1000},N_{1100},N_{1110}$ are in fact algebraically independent.
This can be seen by considering sequences $X=1^{a_1}0^{a_2}\dots1^{a_7}0^{a_8}$ for varying
$a_1,\dots,a_8$. Each of the above $N_\tau(X)$'s is a polynomial function of the $a_i$,
and (an easy computer algebra calculation shows that)
the Jacobian matrix of the vector of $N_\tau$'s as a function of the $a_i$'s is full rank
for generic $a_i$. 

For sequences of length $5$ a similar calculation shows that there are $6$ new algebraically independent patterns, and we conjecture that 
for sequences of length $k$ there are $\binom{k-1}{[(k-1)/2]}$ new algebraically independent patterns,
so that for sequences of length up to $k$ the number of algebraically independent patterns $A(k)$ is conjecturally
$$A(k) = \sum_{j=1}^{k} \binom{j-1}{[\frac{j-1}2]}.$$

\section{Feasibility regions}\label{feasible}

For a given set of words $W=\{w_1,\dots,w_k\}$, typically not all densities $(\delta_1,\dots,\delta_k)\in[0,1]^k$ are feasible,
that is, the set of sublebesgue measures achieving those densities of these words may be empty.
We set $E_W\subset[0,1]^k$ to be the set of \emph{feasible constraints}:
$$E_W = \{(\delta_1,\dots,\delta_k)\in[0,1]^k~|~\exists\mu=f(x)dx\in\Omega:
(\rho_{w_1}(f),\dots,\rho_{w_k}(f)) =(\delta_1,\dots,\delta_k)\}.$$
 
As an example,
for a given $\delta_1=\rho_1(f)$, the density $\delta_{10}=\rho_{10}(f)$ 
is maximized at value $2\delta_1(1-\delta_1)$ when $f$ is the function 
$$f(x)=\begin{cases}1&x\le \rho_1\\0&\text{else}.\end{cases},$$
and minimized at value $0$ when $$f(x)=\begin{cases}0&x\le 1-\rho_1\\1&\text{else},\end{cases}$$ 
and can take any intermediate value (by taking a convex combination of these).
Thus
$$E_{\{1,10\}}=\{(\delta_1,\delta_{10})~:~0\le\delta_1\le1,~~~0\le\delta_{10}\le 2\delta_1(1-\delta_1)\}.$$ 
See Figure \ref{E1-10}.
\begin{figure}[htbp]
\begin{center}\includegraphics[width=2.5in]{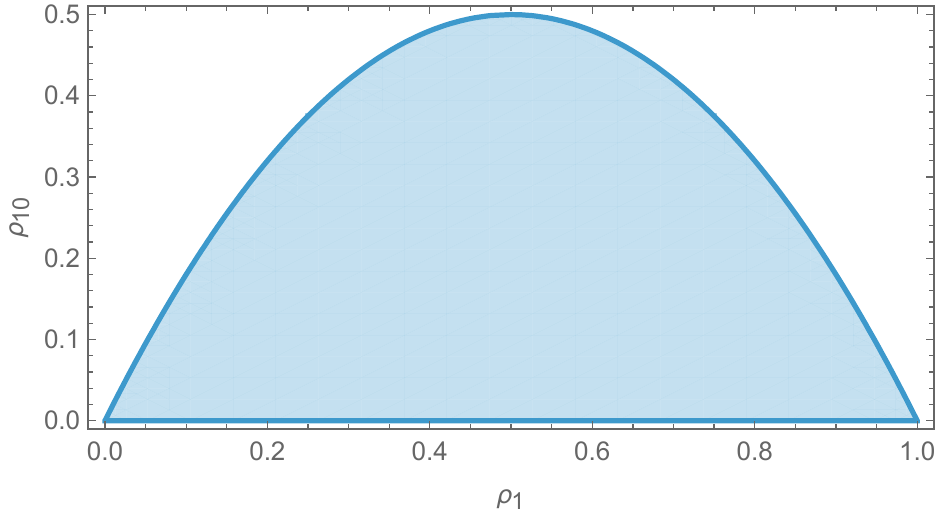}
\end{center}
\caption{\label{E1-10}Feasibility region for patterns $1$ and $10$.}
\end{figure}

While it is in general difficult to determine $E_W$, 
one easier case is when $W=\{1,w\}$, that is, when $W$ consists in two words one of which is $1$ (or $0$). 
See Theorem \ref{1k0thm} for another general family where $E_W$ is explicit.

\old{For example if $W=\{1,1011\}$ then for fixed $\delta_1=\rho_1(f)$, $\rho_{1011}$
is minimized at $0$ and maximized (see below) when $f$ is a step function with three steps,
one for each constant substring of $w$:
$$f(x) = \begin{cases}1&x< a\\0&a<x<b\\1&b<x\end{cases}$$
where $a+1-b=\delta_1$ and $a(b-a)(1-b)^2/2$ is maximal for $0\le a\le b\le 1$, that is $a=\frac{\delta}3, b=1-\frac23\delta$. Thus $E_W$ is the region defined by the inequalities $0\le\delta_{1011}\le\frac{16}9\delta_1^3(1-\delta_1)$.
}

\begin{thm}\label{Cthm} For fixed $\rho_1=\rho$ the set of feasibility for density $\rho_\tau$ is the interval 
$[0,C_\tau\rho^m(1-\rho)^n]$,
where $m,n$ are the number of $1$s and $0$s in $\tau$, respectively, and $C_\tau$ is a constant independent of $\rho$.
\end{thm}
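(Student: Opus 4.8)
The plan is to fix $\rho\in(0,1)$ (the cases $\rho\in\{0,1\}$ being degenerate) and study $G(\rho):=\sup\{\rho_\tau(f): f\in\Omega,\ \int_0^1 f=\rho\}$ together with the corresponding infimum. I assume $\tau$ contains at least one $0$ and one $1$, i.e. $m,n\ge1$; when $\tau=1^k$ or $0^k$ the density is forced ($\rho_\tau=\rho^k$ or $(1-\rho)^k$) and the ``interval'' collapses to a point. Since $\{f: 0\le f\le1,\ \int f=\rho\}$ is weak-$*$ compact and $\rho_\tau$ is continuous for the induced topology (this is the easy half of the theorem of Section \ref{Wass}, since weak-$*$ convergence of uniformly bounded densities forces $d_W$-convergence of the measures), both extrema are attained. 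The infimum is $0$: each factor in (\ref{genpattern}) is nonnegative so $\rho_\tau\ge0$, and a monotone indicator realizes $0$ --- if $\tau\ne1^m0^n$ then $f=\mathbf 1_{[0,\rho]}$ kills the pattern (every $1$-position would need a coordinate in $[0,\rho]$ and every $0$-position a larger one, which is incompatible with the prescribed order of $\tau$), while if $\tau=1^m0^n$ the reversed indicator $f=\mathbf 1_{[1-\rho,1]}$ does. So the real content is to identify $G(\rho)$ and to fill in the interval.

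The heart of the argument is a \emph{homogeneity} relation, which I would establish by a scaling-plus-reparametrization trick. Write $\mu_1=f\,dx$ and $\mu_0=(1-f)\,dx$, so that (\ref{genpattern}) reads $\rho_\tau=k!\int_{x_1<\dots<x_k}d\mu_{\tau_1}(x_1)\cdots d\mu_{\tau_k}(x_k)$, a form \emph{multilinear} in the pair $(\mu_1,\mu_0)$ and \emph{invariant} under applying any common increasing homeomorphism of $[0,1]$ to all the measures (such a map preserves the ordered simplex). Given a target $\rho'\in(0,1)$, scale $\mu_1\mapsto\frac{\rho'}{\rho}\mu_1$ and $\mu_0\mapsto\frac{1-\rho'}{1-\rho}\mu_0$; since $\tau$ has $m$ ones and $n$ zeros, multilinearity multiplies $\rho_\tau$ by $(\rho'/\rho)^m((1-\rho')/(1-\rho))^n$. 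The scaled measures no longer sum to Lebesgue measure, so they do not come from a sublebesgue density; but their sum $\nu$ is an absolutely continuous probability measure with density bounded below by $\min(\rho'/\rho,(1-\rho')/(1-\rho))>0$, so its distribution function $\Psi$ is an increasing homeomorphism of $[0,1]$. Pushing forward by $\Psi$ leaves $\rho_\tau$ unchanged (invariance) and sends $\nu$ to Lebesgue measure, so $\Psi_*(\tfrac{\rho'}{\rho}\mu_1)=f''\,dx$ for a genuine density $f''\in[0,1]$ with $\int f''=\rho'$. This produces from any $f$ at level $\rho$ an $f''$ at level $\rho'$ with
$$\rho_\tau(f'')=\Big(\tfrac{\rho'}{\rho}\Big)^m\Big(\tfrac{1-\rho'}{1-\rho}\Big)^n\rho_\tau(f).$$
Applying this in both directions between $\rho$ and $\rho'$ shows that $G(\rho)/(\rho^m(1-\rho)^n)$ is independent of $\rho$; call it $C_\tau$. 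It is finite ($\rho_\tau\le1$) and positive --- indeed $f\equiv\rho$ gives $\rho_\tau=\rho^m(1-\rho)^n$ exactly, so $C_\tau\ge1$. Hence $G(\rho)=C_\tau\rho^m(1-\rho)^n$.

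To fill in the interval I would use connectedness. Let $f_{\min}$ and $f_{\max}$ (both of mean $\rho$) attain $\rho_\tau=0$ and $\rho_\tau=G(\rho)$. The segment $f_t=(1-t)f_{\min}+tf_{\max}$ stays in $\{f:0\le f\le1,\ \int f=\rho\}$ for all $t\in[0,1]$, and $t\mapsto\rho_\tau(f_t)$ is a polynomial in $t$, hence continuous, taking the values $0$ and $C_\tau\rho^m(1-\rho)^n$ at the endpoints; by the intermediate value theorem every value in between is realized, so the feasible set is exactly $[0,C_\tau\rho^m(1-\rho)^n]$.

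The main obstacle is getting the scaling relation honestly. Naively rescaling $\mu_1,\mu_0$ breaks the defining constraint $\mu_1+\mu_0=dx$ of a sublebesgue measure, and the only reason this is harmless is the reparametrization invariance of $\rho_\tau$, which lets me restore the constraint ``for free'' by pushing forward along the distribution function of the rescaled total mass. The routine-but-necessary checks are then: that (\ref{genpattern}) is invariant under common increasing reparametrizations (a change of variables on the ordered simplex), that the pushed-forward density stays in $[0,1]$ (it equals $\frac{d(\rho'\mu_1/\rho)}{d\nu}\circ\Psi^{-1}$, a ratio manifestly in $[0,1]$), and the compactness/continuity input needed for existence of the extremizers. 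None of these should present real difficulty once the trick is in place.
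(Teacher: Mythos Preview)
Your argument is correct, and its core idea coincides with the paper's: the pattern density $\rho_\tau$ depends on the pair $(\mu_1,\mu_0)$ only through their joint reparametrization class, and is separately homogeneous of degrees $m,n$ in $\mu_1,\mu_0$; combining these gives $G(\rho)=C_\tau\rho^m(1-\rho)^n$. The paper implements this by the change of variable $u=F(x)$ (the CDF of $\mu_1$), which sends $\mu_1$ to Lebesgue on $[0,\rho]$ and $\mu_0$ to $h(u)\,du$ with $h=H'-1\ge0$, and then rescales $u$ and $h$ to remove the $\rho$-dependence (this is the step (\ref{const}) referred to). You instead scale $\mu_1,\mu_0$ first and then reparametrize by the CDF of the \emph{sum}, obtaining a direct bijection between the constraint sets at any two values of $\rho$; this is a cleaner and more symmetric packaging of the same mechanism, and it makes the multilinearity/invariance structure explicit rather than implicit in a particular coordinate choice. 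You also supply details the paper leaves to the reader: that the infimum is $0$ (via monotone indicators), that the feasible set is a genuine interval (via the convex interpolation $f_t$ and the intermediate value theorem), and the degenerate cases $\tau=1^m$ or $\tau=0^n$ where the ``interval'' collapses. The compactness/continuity justification for existence of the maximizer is fine as written.
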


\begin{proof} See (\ref{const}) in the proof of Theorem \ref{1010thm} below, which easily generalizes to this case.
\end{proof}

The value of $C_\tau$ can be quite nontrivial to calculate.
See the table below for $C_\tau$ for some choices of $\tau$. The proofs of these are sketched in the appendix.
\begin{center}\begin{tabular}{|r| l|}
\hline
pattern&$C$-value\\
\hline
$10$ &$2$\\
$1^k0^l$ & $\binom{k+l}{k}$\\
$1^{k}0^{l}1^{m}$ &$\binom{k+l+m}{k, l, m}\frac{k^km^m}{(k+m)^{k+m}}$\\
$1010$&$\frac{12}{e^2}$\\ 
$11010$&$30e^{-\pi/\sqrt{3}}$\\
$10110$&$\frac{20}9$\\
$10101$&$\frac{30\xi^2}{(1+\xi)^2}~~~\text{where $\xi e^\xi=e^{-1}$}$\\
\hline
\end{tabular}
\end{center}

As a special case, we analyze completely the case $\tau=1010$.
\begin{thm}\label{1010thm} For fixed $\rho_1=\rho$ the density $\rho_{1010}$ is maximized at $\rho_{1010}=\frac{12}{e^2}\rho^2(1-\rho)^2$,
and the unique maximizing sublebesgue measure has density
$$f(x)= \begin{cases}1&x<\frac{\rho}e\\
\frac12(1+\frac{e^{1/2}(x+\rho-1)}{\sqrt{4\rho(1-\rho)+e(x+\rho-1)^2}})&\frac{\rho}e<x<1-\frac{1-\rho}e\\0&1-\frac{1-\rho}e<x.\end{cases}$$

\end{thm}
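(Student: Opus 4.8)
The plan is to treat this as a constrained variational problem: maximize the quartic functional $I[f]=\rho_{1010}(f)$ given by (\ref{genpattern}) over all $f\colon[0,1]\to[0,1]$ subject to $\int_0^1 f=\rho$. A maximizer exists by weak-$*$ compactness of the constraint set together with the weak continuity of $I$ (each factor in (\ref{genpattern}) is a bounded integral of $f$). I would introduce the nested ``prefix'' and ``suffix'' integrals $A_1=\int_0^x f$, $A_2=\int_0^x(1-f)A_1$, $A_3=\int_0^x fA_2$ and $B_4=\int_x^1(1-f)$, $B_3=\int_x^1 fB_4$, $B_2=\int_x^1(1-f)B_3$, so that the functional derivative of $I$ at a point $x$ is $H(x)=B_2-A_1B_3+A_2B_4-A_3$ (the four terms arise from letting $x$ play the first, second, third, or fourth role in the pattern $1010$, carrying signs $+,-,+,-$). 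With a Lagrange multiplier $\lambda$ for the mass constraint, the first-order conditions read: $f(x)=1$ where $H(x)>\lambda$, $f(x)=0$ where $H(x)<\lambda$, and $H(x)=\lambda$ wherever $0<f(x)<1$.

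The heart of the argument is to differentiate the interior relation $H\equiv\lambda$. Using $A_1'=f$, $A_2'=(1-f)A_1$, $A_3'=fA_2$ and $B_4'=-(1-f)$, $B_3'=-fB_4$, $B_2'=-(1-f)B_3$, the $f$-dependent terms cancel in a remarkable way: one finds $H'=-(B_3-A_1B_4+A_2)=:K$, then $K'=2\bigl(fB_4-(1-f)A_1\bigr)=:2J$, and finally $J'=f'(A_1+B_4)-2f(1-f)$. The condition $J\equiv0$ gives $A_1=fS$ and $B_4=(1-f)S$ with $S:=A_1+B_4$, and since $S'=2f-1$ this integrates to the conserved quantity $S\sqrt{f(1-f)}=C$. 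Substituting back into $J'=0$ yields the separable ODE $f'=2(f(1-f))^{3/2}/C$, which integrates to $\tfrac{2f-1}{\sqrt{f(1-f)}}=(x-x_c)/C$ for constants $C,x_c$; solving for $f$ reproduces exactly the stated profile $f=\tfrac12\bigl(1+\ell/\sqrt{4+\ell^2}\bigr)$ with $\ell=(x-x_c)/C$. Thus on the interior $f$ is smooth and increasing, sandwiched between a left block $f=1$ (up to some $x_0$) and a right block $f=0$ (beyond some $x_1$); note that $f$ genuinely jumps at $x_0$ and $x_1$, which is the source of the nonanalyticity advertised in the introduction.

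It remains to determine $x_0,x_1,x_c,C$. Matching the first integrals across the jumps --- using $A_1(x_0)=x_0$ (the left block is all $1$'s) and $B_4(x_1)=1-x_1$ (the right block is all $0$'s) --- forces $C^2=x_0x_c$ and $C^2=(1-x_1)(1-x_c)$, while the mass constraint $\int_0^1 f=\rho$ collapses (via the identity $A_1-B_4=x-x_c$) to $x_c=1-\rho$. Writing $\kappa:=C^2/(\rho(1-\rho))$ this gives $x_0=\kappa\rho$ and $x_1=1-\kappa(1-\rho)$, but leaves $\kappa$ undetermined: the conditions assembled so far hold for \emph{every} $\kappa$. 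The decisive remaining requirement is that $H$ be genuinely constant, i.e.\ $K\equiv0$; since $A_1B_4=C^2$ this is $A_2+B_3=C^2$. Changing variables from $x$ to $f$ (so that $dx=C\,df/(2(f(1-f))^{3/2})$) turns $A_2$ and $B_3$ into elementary logarithmic integrals whose $x$-dependent parts cancel, and one computes $A_2+B_3=-C^2\ln\kappa$; the equation $-C^2\ln\kappa=C^2$ then forces $\kappa=1/e$. This is precisely where the transcendental constant $e$ enters. Back-substituting gives $C^2=\rho(1-\rho)/e$, $x_0=\rho/e$, $x_1=1-(1-\rho)/e$, $x_c=1-\rho$, which is the asserted formula for $f$.

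Finally I would evaluate the maximum by writing $I=\int_0^1(1-f)A_3$ and again changing variables to $f$; after setting $t=\ln\frac{f}{1-f}$ the relevant integrals are of the form $\int(t-L_0)e^{\pm t/2}\,dt$, and the boundary contributions telescope to give $I=C^4/2$, whence $\rho_{1010}=4!\,I=12C^4=\frac{12}{e^2}\rho^2(1-\rho)^2$. The same computation exhibits the $\rho^2(1-\rho)^2$ scaling, which is the case $m=n=2$ of the general factorization asserted in Theorem \ref{Cthm}. I expect the main obstacle to lie in rigor rather than in the calculation, at two points. First, one must justify that the maximizer really has the single-interval ``$1$/interior/$0$'' structure (ruling out several interior arcs, or a block of $0$'s preceding the $1$'s); I would handle this through the sign structure of $H-\lambda$ together with the second-order data $H''=2J$, which is positive just left of $x_0$ (where $J=B_4(x_0)>0$) and negative just right of $x_1$ (where $J=-A_1(x_1)<0$), confirming the jumps are consistent with a maximum. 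Second is the uniqueness claim, for which I would argue that the Euler--Lagrange system admits only the $\kappa=1/e$ solution within this structural class and that no configuration of a different shape can attain the same value.
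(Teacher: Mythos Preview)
Your calculation is correct, but your route is genuinely different from the paper's. The paper does \emph{not} work directly with $f$; instead it performs the change of variables $u=F(x)$ (with $F$ the distribution function and $H=F^{-1}$), which collapses $\rho_{1010}$ to
\[
\rho_{1010}=24\int_{0<u<v<\rho}u(v-u)\,h(u)h(v)\,du\,dv,\qquad h:=H'-1\ge 0,
\]
a functional that is only \emph{quadratic} in the unknown $h$. A further rescaling $g(x):=\tfrac{\rho}{1-\rho}h(\rho x)$ turns $h$ into a probability measure $g$ on $[0,1]$ and makes the factor $\rho^2(1-\rho)^2$ manifest before any optimization is done. The Euler--Lagrange equation (one differentiation, against $\delta'_a$) then gives $g(a)=c/a^2$ on its support; allowing $g\equiv 0$ near $0$ and a point mass at $1$, the paper finds the three free constants all equal to $e^{-1}$. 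Only at the very end does it invert $H$ to recover $f$.

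Your approach keeps the problem quartic in $f$ and compensates by differentiating the stationarity relation $H\equiv\lambda$ three times, discovering the first integral $S\sqrt{f(1-f)}=C$ and the separable ODE for $f$; the scaling $\rho^2(1-\rho)^2$ then appears only at the end via $\rho_{1010}=12C^4$. Both arguments leave the same soft spots (structure of the maximizer, uniqueness), and indeed the paper handles the ``no interior zero interval'' issue by a short contradiction argument in the $g$-picture rather than via second-order information as you propose. What the paper's reduction buys is a cleaner optimization (quadratic, over probability measures, with the $\rho$-dependence factored out) and a framework that immediately generalizes to Theorem~\ref{Cthm} and to the other $C_\tau$ computations in the appendix; what your direct approach buys is an explicit ODE for $f$ itself and a transparent identification of where $e$ enters (the single equation $A_2+B_3=C^2$ forcing $\kappa=e^{-1}$).
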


See Figures \ref{f1010} and \ref{F1010YD} for the graph of $f$. 
Note that the choice of $\rho_1$ which maximizes the maximum $\rho_{1010}$-density $\frac{12}{e^2}\rho_1^2(1-\rho_1)^2$ is $\rho_1=\frac12$, and then $\rho_{1010}=\frac{3}{4e^2}$.

\begin{figure}[htbp]
\begin{center}\includegraphics[width=2in]{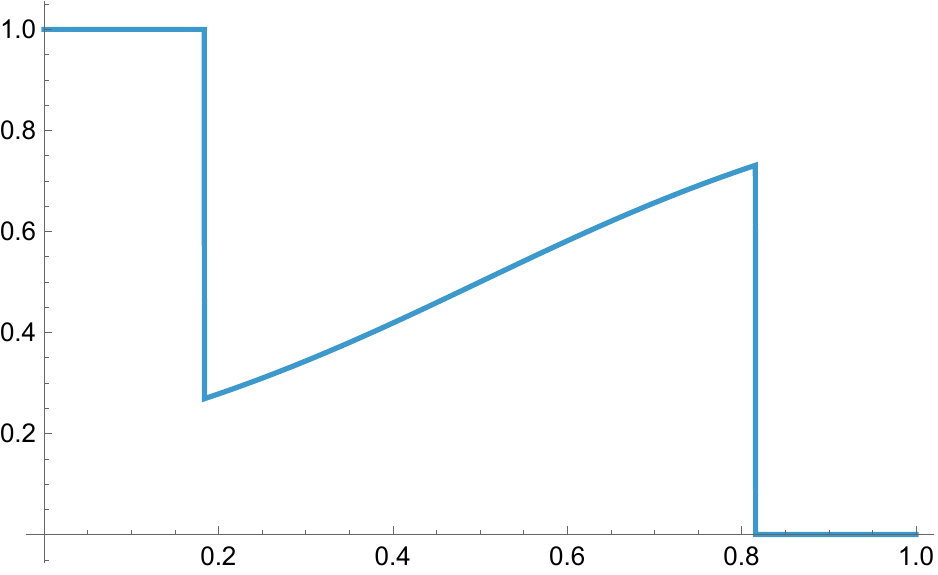}\hskip1cm\includegraphics[width=2in]{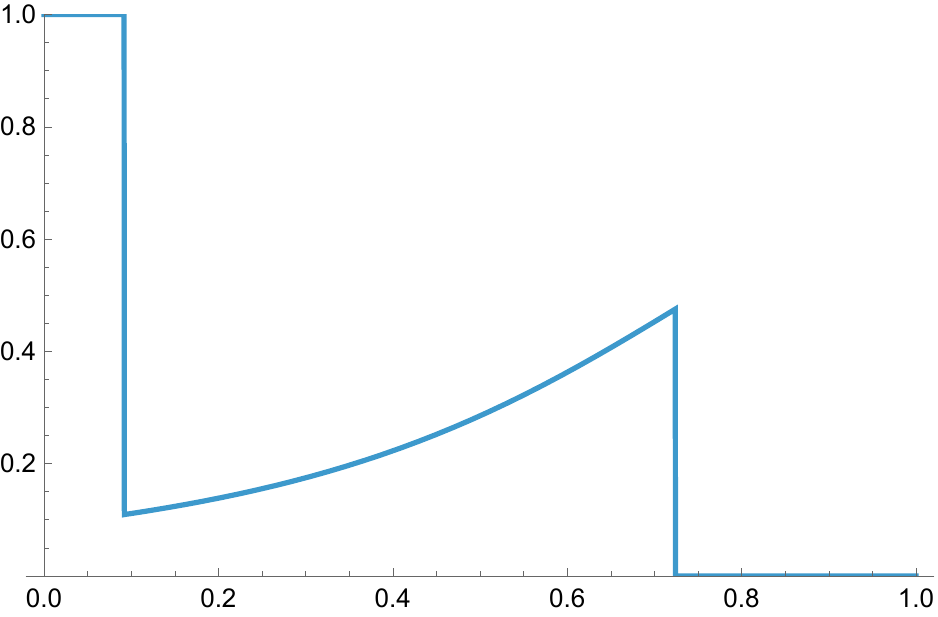}
\end{center}
\caption{\label{f1010}Plot of density $f(x)$ of the $\rho_{1010}$-maximizer when $\rho_1=\frac12$ and when $\rho_1=\frac14$.}
\end{figure}
\begin{figure}[htbp]
\begin{center}\includegraphics[width=1.5in]{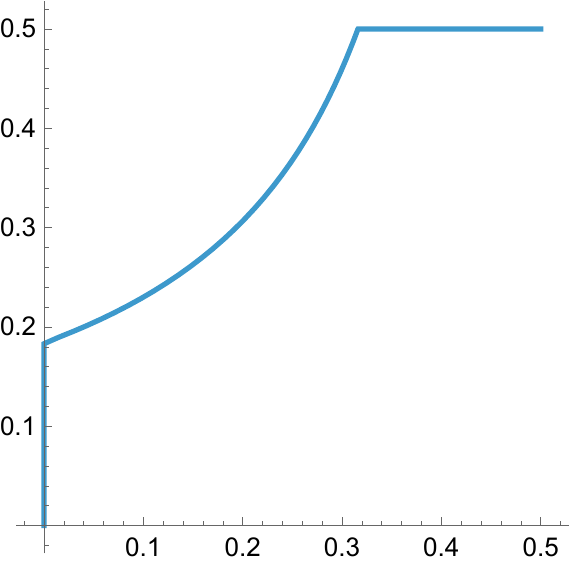}
\end{center}
\caption{\label{F1010YD}Plot of the $\rho_{1010}$-maximizer as a NE path, when $\rho_1=\frac12$.}
\end{figure}

\begin{proof}
We want to maximize $\rho_{1010}$ for fixed $\rho=\rho_1$. 
From (\ref{genpattern}) we have 
$$\rho_{1010} = 24\int_{0<x<y<z<w<1}f(x)(1-f(y))f(z)(1-f(w))dx\,dy\,dz\,dw.$$
Let $F(x) = \int_0^x f(y)dy$ be the distribution function and $H=F^{-1}$ the inverse distribution function. 
Since $F$ is nondecreasing,
$H'$ is defined as a generalized function ($H$ may have jump singularities), and $H'(y)\ge1$ since $F'(x)\le 1$. We can rewrite the integral
for $\rho_{1010}$ using the change of variable $u=F(x),~ x=H(u)$ where $u$ runs from $0$ to $\rho$ as $x$ runs from $0$ to $1$.
Thus 
\begin{align*}\rho_{1010} &= 24\int_{x<y<z<w}f(x)(1-f(y))f(z)(1-f(w))dx\,dy\,dz\,dw\\
&=24\int_{0<p<u<q<v<\rho} (H'(u)-1)(H'(v)-1)dp\,du\,dq\,dv\\
&=24\int_{0<u<v<\rho} u(v-u)(H'(u)-1)(H'(v)-1)\,du\,dv.
\end{align*}
Let $h(u) := H'(u)-1\ge0$ giving
$$\rho_{1010}=24\int_0^{\rho}\int_0^v u(v-u)h(u)h(v)\,du\,dv$$
which we need to maximize over nonnegative (generalized) functions $h$, where $h$ integrates to $1-\rho$ on the interval $[0,\rho]$.
Note the scale invariance: by scaling $u,v$ by $\rho$ we can change the domain of definition of $h$ to $[0,1]$.
We can then scale vertically by $\frac{\rho}{1-\rho}$ to make the integral of $h$ equal to $1$.
In other words, letting $g(x) := \frac{\rho}{1-\rho}h(x\rho)$ then $g$ is defined on $[0,1]$, nonnegative, integrates to $1$
and does not depend on $\rho$ at all.
Consequently 
\be\label{const}\rho_{1010}=C\rho^2(1-\rho)^2\ee
where $C$ does not depend on $\rho$:
\be\label{Cdef}C:=24\max_{g}\int_{0<x<y<1}x(y-x)g(x)g(y)\,dx\,dy\ee 
and the max is over nonnegative generalized functions $g\ge0$ with integral $1$, that is, over probability measures on $[0,1]$.
(The probabilistic interpretation of (\ref{Cdef}) is as follows: take two independent random samples from $g$; let $x$ be the smaller and $y$ be the larger. Which distribution $g$ maximizes the expected value of $x(y-x)$?)

To find the maximum, we consider perturbations $g\to g+\eps k$ where $k$ is of integral zero. Letting $C(g)$ denote the integral on the RHS of (\ref{Cdef}), we have (dropping the scalar $24$)
\be\label{dde}0=\frac{d}{d\eps}C(g+\eps k)\Big|_{\eps=0} = \int_{0<x<y<1}x(y-x)(g(x)k(y)+k(x)g(y))dx\,dy.
\ee
If $a$ is a point where $g(a)>0$, we can choose $k(u)$ to approximate the delta function derivative $\delta'_a(u)$ and this becomes after integration by parts
\begin{align*}0&= \int_{0}^axg(x)dx+\int_{a}^{1}(y-2a)g(y)dy\\
&=\int_{0}^{1}xg(x)\,dx-2a(G(1)-G(a))
\end{align*}
where $G(a)=\int_0^ag(x)\,dx$ is the distribution function of $g$.
Letting $2c=\int_{0}^{1}xg(x)\,dx$ this shows that $G(a) = G(1)-\frac{c}{a},$
so $g(a) = \frac{c}{a^2}$ when $g>0$. 

We also have to consider the possibility that $g\equiv0$ on an interval (since in this case perturbing by 
$\delta'_a(u)$ is not possible because $g$ is constrained to be nonnegative). Suppose $g$ is positive at a point $a$:  $g(a)>0$,  and a point $b$ to its right,
$b>a$, is contained in an interval on which
$g\equiv0$. Then consider a perturbation $g\mapsto g+\eps k$ with $k$ approximating $-\delta_a + \delta_b$.  Equation (\ref{dde}) becomes
$$0=(b-a)\int_0^1x g(x)dx+\int_a^b(-x^2+a^2)g(x)dx+(a^2-b^2)\int_b^1g(y)dy.$$
Consider the RHS as a function of $b$. 
Here the middle integral is independent of $b$ near $b$ since $g$ is zero there; but then we have
$$0=2c(b-a)+c_2 + (a^2-b^2)(G(1)-G(b))$$ which implies that $G(b)$ is a nonconstant function of $b$ near $b$, contradicting
the fact that $g$ is zero near $b$. Thus $g$ cannot be zero on such an interval.

It is however possible that $g\equiv0$ on an interval with no positive $g$-values to its left, that is, on an interval containing $0$. 
By symmetry (switching $0$'s and $1$'s in the pattern) 
if $g(x)=0$ for $x\in[0,b]$ then $g$ can also have a point mass $b'\delta_1$, of some mass $b'$, at $1$.
This leads us to consider $g$ of the form 
$$g(x) = \begin{cases}0&x<b\\ \frac{c}{x^2}&b<x<1\\
b'\delta_{1}&x=1.\end{cases}$$
The conditions $\int_0^1g(x)dx=1$ and $2c=\int_0^1xg(x)\,dx$ determine $b',c$ as functions of $b$. Then maximizing over $b$
yields $b=b'=c=e^{-1}$. For these values we find $C = \frac{12}{e^2}$.
Solving for $f$ yields the formula in the statement. 
\end{proof}

\section{Variational principle}\label{variational}

We now take $n$ large and consider sequences in $\{0,1\}^n$ with fixed densities $\rho_{w_i}$ of certain patterns $w_1,\dots,w_k$.
What does a typical such sequence look like?

We can study the limits, as $n\to\infty$, of such constrained random sequences by a variational principle,
maximizing the entropy of a sublebesgue measure subject to those density constraints.
For a sublebesgue measure $\mu=f(x)\,dx$ we define the entropy $\ENT(f)$ by
$$\ENT(\mu) = \int_0^1 S(f(x))\,dx$$
(where $S(p) = -p\log p-(1-p)\log(1-p)$ is the Shannon entropy function).
The entropy $\ENT(f)$ is the exponential growth rate of sequences whose density lies near $f$, 
in the following sense (see \cite{Vershik}).
Let $Z_n=\{X\in\{0,1\}^n~|~d_W(\mu_X,\mu)\le\eps\}$. Then
$$\lim_{\eps\to0}\lim_{n\to\infty}\frac1n\log|Z_n| = \int_0^1S(f(x))\,dx.$$

For a particular set of constraints $\rho_{w_1}=\delta_1,\dots,\rho_{w_k}=\delta_k$, the limiting typical density can be obtained
by maximizing
$$\E(f) = \ENT(f) + a_1\rho_{w_1} +\dots +  a_k\rho_{w_k}$$
where the $a_i$ are Lagrange multipliers determined by the constraints. We call an optimizer a \emph{limit shape}.
We don't have at present any guarantee of uniqueness of the optimizer, although we don't know of any cases 
where it is not unique.

\old{
\subsection{$\rho_{10}$}

As an example, suppose we wish to find a typical sequence with fixed density $\rho_1$ of $1$s and $\rho_{10}$ of $10$'s.
The first solution to this problem is attributed to Vershik.
We need to maximize
$$\int_0^{1} S(f(y))\,dy + A \rho_1 + B\rho_{10}.$$
Here $A,B$ are Lagrange multipliers.

The resulting Euler-Lagrange equation is
\be\label{EL10}S'(f)f'(x) +A+Bx = 0\ee
for a different constant $A$. We see this as follows. Let $g$ be a function of integral $1$.
$$\frac{d}{d\eps}\left[\int S(f+\eps g)) + A\int(f+\eps g) + B\left(\int_0^1\int_0^sf(u)+\eps g(u)\,du\,ds - \frac12(F(1)+\eps)^2\right)\right]=$$
$$=S'(f)g + Ag+B\int_0^1\int_0^sg(u)du\,ds - BF(1)$$
and setting $g=\delta_{x_0}$
$$=S'(f(x)) + A+B(1-x) - BF(1).$$

Solving (\ref{EL10}) we find that $f(x)$ has the form $f(x) = \frac{1}{1+e^{-a+bx}}$ for some constants $a,b$. 
Here $a,b$ are now fixed by the values $\rho_1,\rho_{10}$. 
See Figure \ref{}.

The function plotted has an explicit parameterization in terms of $a,b$.
For example when $\rho_1=\frac12$, the values $\rho_{10}$ and $S$ are parameterized
simultaneously as functions of $b\in\R$ as
$$(\rho_{10}(b),S(b))=\left(\frac{\pi^2+6b\log(1+e^{b/2})+12\Li(-e^{b/2})}{6b^2},-\frac{\pi^2+3b\log(1+e^{b/2})+12\Li(-e^{b/2})}{3b}\right).$$
\begin{figure}[htbp]
\begin{center}\includegraphics[width=3in]{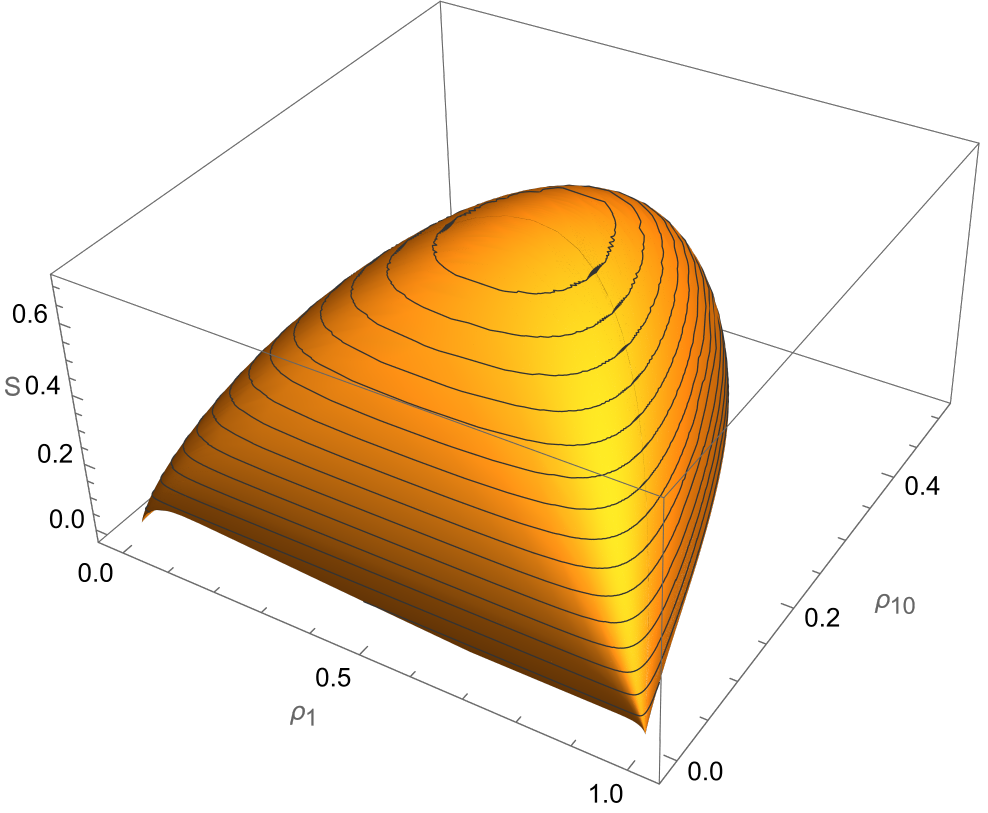}
\end{center}
\caption{Plot of $S$ as a function of $\rho_1$ and $\rho_{10}$.}
\end{figure}

Here $\rho_{10}(b),S(b)$ have the remarkable relationship that $2b\rho_{10}+S(b) = \log(1+e^{b/2})$ and $S'/\rho_{10}'=-b$.
This is a scaled form of Legendre duality. 
}

\section{$0, 10, 110,\dots,1^k0$ patterns}\label{1k0}

Let us first find the optimizing sublebesgue measure (limit shape) with fixed pattern densities $\rho_0,\rho_{10}$ and $\rho_{110}$.
We discuss the general case afterwards.
Using the change of variables $y=F(x), x=H(y)$ with $dx=H'(y)dy$ we have
\begin{align*}\rho_{110} &= 6\int_{0\le x<u<v\le 1}f(x)f(u)(1-f(v))\,dx\,du\,dv\\
& = 6\int_{0\le a<b<y\le\rho_1} (H'(y)-1)da\,db\,dy \\
&= 3\int_0^{\rho_1} y^2(H'(y)-1)\,dy.
\end{align*}

To maximize $f$ for fixed $\rho_0,\rho_{10}$ and $\rho_{110}$,
the variational equation is
$$\max_f\left\{\int_0^1 S(f(x))\,dx + A\rho_{0}+B\rho_{10}+C\rho_{110}\right\}$$
where $A,B,C$ are Lagrange multipliers.
Using the above change of variables this is
$$\max_H\left\{\int_0^{\rho_1}S(1/H'(y))H'(y)\, dy +\int_0^{\rho_1}(A+2By+3Cy^2)(H'(y)-1)\,dy \right\}.$$
Let $h(y)=H'(y)-1$.
Replacing $h(y)$ with $h(y)+\eps \delta'_{y=z}$, and taking the derivative with respect to $\eps$ at $\eps=0$ yields
$$0=\int_0^{\rho_1}-\log\frac{h(y)}{h(y)+1}\delta'_{y=z}\, dy +\int_0^{\rho_1}(A+2By+3Cy^2)\delta'_{y=z}\,dy$$
$$0=-\frac{h'(z)}{h(z)^2+h(z)} + 2B+6Cz.$$
This has the solution
\be\label{lsd}H'(y)=h(y)+1 = \frac1{1-e^{a+by+cy^2}}\ee
for constants $a,b,c$. 
Here $b=2B,c=3C$ depend on $B,C$ and $a$ must be chosen so that $H(\rho_1)=1$, as follows. 
For fixed $\rho_1$, the integral $\int_0^{\rho_1}H'(y)dy$ is not defined if $a+by+cy^2$ has a zero in $[0,\rho_1]$,
and the integrand is negative if $a+by+cy^2>0$ for $y\in[0,\rho_1]$. So we need to choose $a<0$ so that
$a+by+cy^2<0$ for $y\in[0,\rho_1]$. Moreover under this constraint the integral is a monotone function of $a$,
so there is a unique value of $a<0$ for which $1=H(\rho_1)=\int_0^{\rho_1}H'(y)dy$. 

Conversely,
given $a,b,c\in\R$ with $a<0$ we can define $\rho_1$ to be the first positive value for which 
$\int_0^{\rho_1}H'(y)dy=1$; necessarily $0<\rho_1<1$ since $H'(y)>1$. Then $\rho_0=1-\rho_1$, and
$\rho_{10},\rho_{110}$ are defined by the integrals
$$\rho_{10}=2\int_0^{\rho_1}y(H'(y)-1)\,dy,~~~~~\rho_{110}=6\int_0^{\rho_1}y^2(H'(y)-1)\,dy.$$

We thus see that the limit shape density is of the form (\ref{lsd}), where $a,b,c$ are functions of the Lagrange multipliers 
$A,B,C$. Moreover the map from triples $(a,b,c)$ with $a<0$ to triples $(A,B,C)$ is a homeomorphism; see below.

The same calculation above applies to any set of patterns $\rho_0,\rho_{10},\rho_{110},\dots,\rho_{1^k0}$:

\begin{thm}\label{1k0thm} For a set of pattern densities $(\rho_0,\rho_{10},\rho_{110},\dots,\rho_{1^k0})=(\delta_0,\dots,\delta_k)$
the resulting entropy-maximizing function $H'(y)$ is of the form
\be\label{Hpform}H'(y) = \frac1{1-e^{p(y)}}\ee
for a polynomial $p$ of degree $k$ with real coefficients and negative constant coefficient. The map $\Phi:\R_-\times\R^{k-1}\to E_W$
from such polynomials to 
the interior of $E_W$, the feasible region, is a homeomorphism. Densities on the boundary of $E_W$ are $\{0,1\}$-valued step functions having at most $k/2+1$ intervals on which they take value $0$ (and if this number is equal to $k/2+1$, the first and last steps are intervals of value $0$).
\end{thm}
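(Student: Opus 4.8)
The plan is to proceed in three stages: first generalize the $\rho_{110}$ computation to get the form \eqref{Hpform} for every $k$; then identify the densities as linear moments of an auxiliary measure, which makes the variational problem concave and delivers the homeomorphism; and finally read off the boundary behavior from classical moment theory. For the form, the change of variables $y=F(x)$, $x=H(y)$ used above turns each pattern density into a moment,
\[ \rho_{1^j0} = (j+1)\int_0^{\rho_1} y^{j}\,(H'(y)-1)\,dy, \qquad j=0,1,\dots,k, \]
the cases $j=0,1,2$ recovering $\rho_0,\rho_{10},\rho_{110}$. Writing $h=H'-1\ge0$ and collecting the $k+1$ Lagrange multipliers into a single degree-$k$ polynomial $p$, the same Euler--Lagrange variation (perturb $h$ by $\delta'_{y=z}$, using that $S(1/H')H'=(h{+}1)\log(h{+}1)-h\log h$ has $h$-derivative $\log(1+1/h)$) gives $\log\frac{h}{h+1}=p(y)$, i.e.\ \eqref{Hpform}. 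The constant term $p(0)$ is negative because $H'(0)$ is finite; and since $H'=1/(1-e^{p})$ has a non-integrable singularity wherever $p$ vanishes, the normalization $\int_0^{\rho_1}H'=1$ forces $\rho_1$ below the first zero of $p$, so $p<0$ on $[0,\rho_1]$ automatically. The natural parameter space is thus the $(k{+}1)$-dimensional set of degree-$k$ polynomials with $p(0)<0$, matching $\dim E_W=k+1$.

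The structural point for the homeomorphism is that, written in $h$, each $\delta_j$ is \emph{linear} in $h$, whereas in $f$ the densities are only polynomial. Fixing $\delta_0$ fixes $\rho_1=1-\delta_0$ and the interval $[0,\rho_1]$, on which I would maximize the strictly concave functional $\int_0^{\rho_1}\big((h{+}1)\log(h{+}1)-h\log h\big)\,dy$ subject to the affine constraints $(j{+}1)\int_0^{\rho_1}y^j h\,dy=\delta_j$. Strict concavity forces a unique maximizer, of the form \eqref{Hpform}; hence the moments $(\delta_1,\dots,\delta_k)$ determine $h$, which determines $1-e^{p}$ and so $p$ on an interval, i.e.\ as a polynomial. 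This gives injectivity of $\Phi$ on each slice, and, since $\delta_0=1-\rho_1$ is itself recovered, globally. Continuity is clear ($\rho_1\mapsto\int_0^{\rho_1}H'$ is strictly increasing, so $\rho_1$ and the moments depend continuously on $p$). As $\Phi$ is then a continuous injection between manifolds of equal dimension $k+1$, invariance of domain makes it an open map and a homeomorphism onto its open image; together with the surjectivity below this is the assertion.

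For surjectivity, any interior density admits an entropy-maximizer (weak-$*$ compactness plus upper semicontinuity of entropy) satisfying the Euler--Lagrange equation, hence of the form \eqref{Hpform} with $p<0$, so it lies in the image. The boundary statement is the heart of the matter, and I would extract it from the moment reformulation: on a slice $\delta_0=\text{const}$ the achievable $(\delta_1,\dots,\delta_k)$ is exactly $\big((j{+}1)\int y^j\,d\nu\big)_j$ over nonnegative measures $\nu$ of mass $\delta_0$ on $[0,\rho_1]$, an affine image of the moment body of the Tchebycheff system $\{1,y,\dots,y^k\}$. A point lies on the boundary iff it maximizes some linear functional $\int r\,d\nu$ with $\deg r\le k$, and by the theory of principal representations (Krein--Markov; Karlin--Studden) the extremal measures are atomic of index $k/2$, supported on at most $k/2+1$ points, with both endpoints $0,\rho_1$ in the support exactly when this maximum is attained. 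Translating back, an atom of $\nu$ at $y_0$ is a jump of $H$, hence an interval on which $F$ is constant and $f\equiv0$; endpoint atoms at $y=0$ and $y=\rho_1$ correspond to value-$0$ intervals at the very start and end of $[0,1]$. This yields precisely the claim: boundary densities are $\{0,1\}$-valued step functions with at most $k/2+1$ zero-intervals, the extremal case beginning and ending with one.

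The main obstacle is this boundary count, namely pinning down the exact index and endpoint bookkeeping of the principal representations and verifying that the self-referential support $[0,1-\delta_0]$ (the interval depends on the prescribed mass) does not disturb the standard moment-theoretic picture. A secondary technical point is justifying the strict concavity and the existence of the maximizer through the non-smooth change of variables $y=F(x)$, where $H$ may carry jumps. I expect both to be routine given Karlin--Studden, but they are where the care is required.
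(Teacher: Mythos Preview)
Your argument is correct in outline and takes a genuinely different route from the paper's.  Where the paper establishes that $\Phi$ is a local homeomorphism by computing the Jacobian $(\partial\rho_i/\partial a_j)$ explicitly, reducing it to a Vandermonde-type integral, and proving positivity via a symmetrization lemma, you instead exploit the strict concavity of the entropy in $h=H'-1$ together with the linearity of the constraints to get global injectivity directly, and then invoke invariance of domain.  This is cleaner and more conceptual, and it sidesteps the matrix calculation entirely.  For the boundary, the paper argues properness by letting coefficients of $p$ blow up and counting the surviving local maxima of the limiting polynomial, which gives the $k/2+1$ bound in an elementary way; you instead recognize the slice $\delta_0=\text{const}$ as (an affine image of) the classical moment body of $\{1,y,\dots,y^k\}$ and read off the boundary structure from Krein--Markov/Karlin--Studden principal representations.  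Your moment-theory route is more structural and immediately explains the endpoint bookkeeping (atoms at $0,\rho_1$ as half-weight), whereas the paper's local-maxima count is self-contained but a little less sharp in its phrasing.

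The two points you flag as obstacles are real but manageable.  The surjectivity step needs the observation that at an interior moment point the maximizer cannot have $h=0$ on a set of positive measure, since the entropy integrand has infinite slope at $h=0^+$ and interiority provides admissible perturbations; this is routine.  The index/endpoint count in the principal-representation theory does deliver exactly the ``at most $k/2+1$ zero-intervals, with equality forcing the first and last'' statement, but you should spell out the half-weight convention carefully, since the paper's own phrasing of $k/2+1$ is slightly informal.
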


For example suppose we want to fix (only) $\rho_1$ and $\rho_{10}$.
Then we need to consider $H$ of the form
$$H'(y)=\frac1{1-e^{a+by}}$$
for constants $a<0$ and $b\in\R$. This leads to densities $f$ of the form $f(x) = \frac1{1+c e^{bx}}$ where $c=e^a/(1-e^a)$,
reconstructing the result of \cite{Vershik}. The upper boundary $\rho_{10}=2\rho_0(1-\rho_0)$ of $E_{0,10}$ consists of step function densities
$f(x) = \begin{cases}1&x<\rho_1\\0&\text{else}\end{cases}$, obtained from the limit $a+bx=b(x-\rho_1)$ as $b\to\infty$ and the lower boundary consists of step function densities
$f(x) = \begin{cases}0&x<\rho_0\\1&\text{else}\end{cases},$ obtained from the limit $a+bx=b(x-\rho_0)$ as $b\to-\infty$.
See Figure \ref{r110fixed} for another example.
\begin{figure}[htbp]
\begin{center}
\includegraphics[width=6cm]{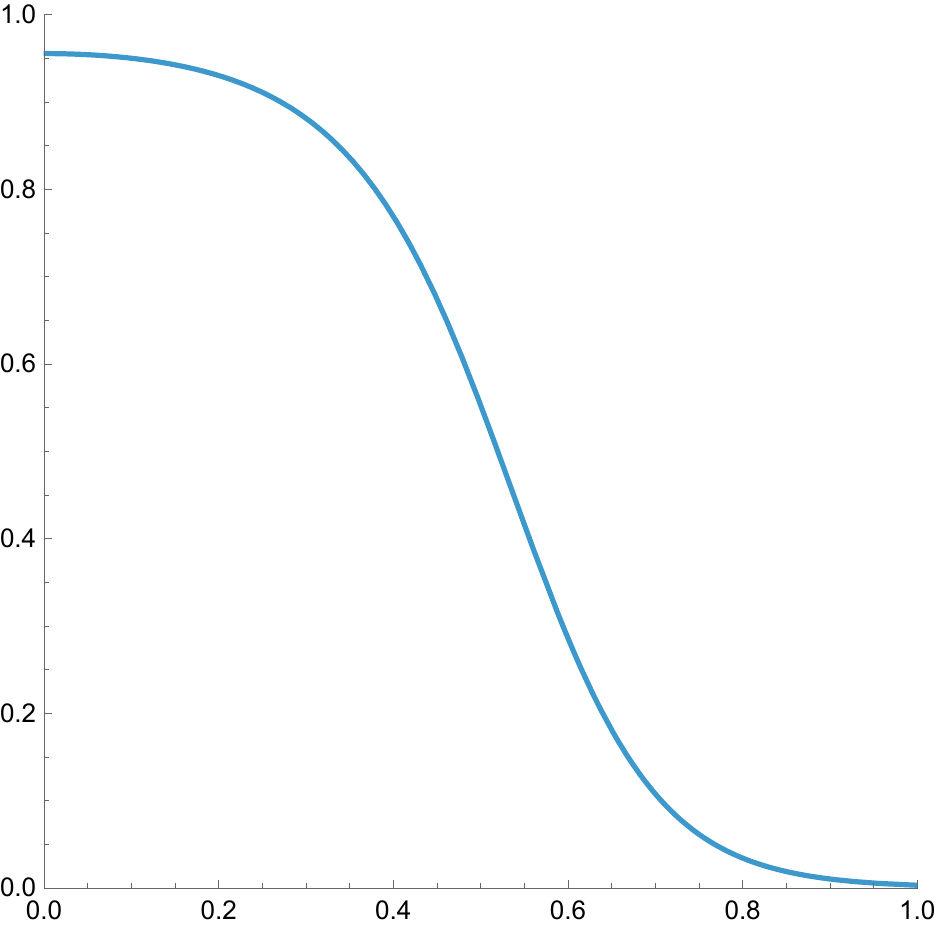}
\end{center}
\caption{\label{r110fixed}The entropy maximizing density $f(x)$ for fixed $\rho_1=\frac12$ and $\rho_{110}=\frac13$.
In this case $H'(y) = \frac1{1-e^{-a+by^2}}$ with $a\approx 3.10795$ and $b\approx 12.42$.}

\end{figure}
\begin{proof}
The fact that $H'(y)$ has the form (\ref{Hpform}) follows from the remarks preceding the statement.
Let $\Phi$ be the map from polynomials $p$ (of degree $\le k$, with negative constant term) to $(\rho_0,\dots,\rho_k)$. It remains to show that $\Phi$ is a proper local homeomorphism. 

To see that $\Phi$ is a local homeomorphism, we show that
the Jacobian of $\Phi$
is nonzero: $J(\Phi)\ne 0$. 
To simplify notation let $\rho_i:=\rho_{1^i0}$ and $\rho=\rho_1$. Let $p(y) = a_0+a_1y+\dots+a_ky^k$. 
First, differentiating the equation $1=\int_0^\rho \frac{1}{1-e^{p(y)}}\,dy$ with respect to $a_j$ gives
\be\label{drdai}\frac1{1-e^{p(\rho)}}\frac{d\rho}{da_j} =-\int_0^\rho \frac{y^j e^{p(y)}}{(1-e^{p(y)})^2}.\ee

We now compute 
\begin{align}\nonumber\frac{d\rho_i}{da_j} &= \frac{d}{da_j}\int_0^\rho \frac{y^ie^{p(y)}}{1-e^{p(y)}}\,dy\\
\nonumber&=\int_0^\rho \frac{y^{i+j}e^{p(y)}}{(1-e^{p(y)})^2}\,dy + \frac{\rho^ie^{p(\rho)}}{1-e^{p(\rho)}}\frac{d\rho}{da_j}\\
\label{dridaj}&=\int_0^\rho \frac{y^{j}(y^i-\rho^ie^{p(\rho)})e^{p(y)}}{(1-e^{p(y)})^2}\,dy.
\end{align}
The differential of $\Phi$ is
$$D\Phi = \left(\frac{d\rho_i}{da_j}\right)_{i,j=0,\dots,k}.$$
Using linearity of the determinant over columns and (\ref{dridaj}) we can write
$$J\Phi = \int_0^\rho\dots\int_0^\rho d\mu(y_0)\dots d\mu(y_k)\det\begin{vmatrix}
1-c&y_1(1-c)&\dots&y_k^k(1-c)\\
y_0-\rho c&y_1(y_1-\rho c)&\dots&y_k^k(y_k-\rho c)\\
\dots\\
y_0^k-\rho^k c&y_1(y_1^k-\rho^kc)&\dots&y_k^k(y_k^k-\rho^kc)
\end{vmatrix}.$$
where $c=e^{p(\rho)}$ and $d\mu(y) = \frac{e^{p(y)}}{(1-e^{p(y)})^2}\,dy.$
Factoring $1-c$ out of row $1$, factoring $y_i^i$ out of column $i+1$ and doing row operations
leaves a Vandermonde matrix, giving
$$J\Phi = (1-c)\int\dots\int d\mu(y_0)\dots d\mu(y_k)y_1y_2^2\dots y_k^k\prod_{i<j}(y_j-y_i).$$

By Lemma \ref{vdm} below, $J\Phi>0$. This proves that $\Phi$ is a local homeomorphism.

We now show that $\Phi$ is proper: when $a_0\to0$, or any $a_i\to\pm\infty$, the associated density tends to a point on the boundary of $E_W$. First, if $a_0$ tends to zero, $\rho_1\to0$. Secondly, if any coefficient(s) $a_i$ tend to $\pm\infty$
(while $p$ remains negative on $[0,\rho_1]$)
then $p$ tends to $-\infty$ at all but at most $k$ points: Lagrange interpolation determines a polynomial of degree $k$ uniquely 
by its values at any $k+1$ points. In fact there are at most $k/2+1$ points where $p$ remains bounded, since these points are necessarily either local maxima of $p$ or the endpoints $p(0)$ or $p(\rho_1)$.
In such a limit $H'$ is supported at at most $k/2+1$ points, and could have a $\delta$-function there if the corresponding $p$ values 
tend to zero. 
These delta functions correspond to intervals on which the limiting $f$ takes value $0$; outside of these $H'=1$ so the limiting
$f$ necessarily takes value $1$. 
\end{proof}

\begin{lemma}\label{vdm}
For any measure $\mu$ on an interval $[a,b]$, where $\mu$ is not supported on fewer than $k+1$ points, we have
$$\int_a^b\dots\int_a^b y_1y_2^2\dots y_k^k\prod_{i<j}(y_j-y_i)d\mu(y_0)\dots d\mu(y_k)>0.$$
\end{lemma}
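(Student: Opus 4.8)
The plan is to exploit the manifest symmetry of the integrand under permutations of the variables $y_0,\dots,y_k$. The integrand as written is not symmetric — the factor $y_1 y_2^2\cdots y_k^k$ singles out the indices — but the domain $[a,b]^{k+1}$ and the product measure $d\mu(y_0)\cdots d\mu(y_k)$ are fully symmetric, so I may symmetrize the integrand without changing the value of the integral. First I would recognize the shape $y_1 y_2^2\cdots y_k^k \prod_{i<j}(y_j-y_i)$ as (up to the Vandermonde) a single monomial, and recall that averaging a monomial times the Vandermonde over all permutations of the arguments produces a determinant: specifically, $\sum_{\sigma\in S_{k+1}}\operatorname{sgn}(\sigma)\,\prod_i y_{\sigma(i)}^{\,e_i}$ equals the determinant of the generalized Vandermonde matrix $\det(y_i^{e_j})$. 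Here the exponent vector is $(e_0,e_1,\dots,e_k)=(0,1,\dots,k)$, so the symmetrization of the plain monomial gives back the ordinary Vandermonde $\prod_{i<j}(y_j-y_i)$.

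Carrying this out, I would compute the symmetrization of the full integrand $y_1 y_2^2\cdots y_k^k\prod_{i<j}(y_j-y_i)$ and find that it equals a constant multiple of $\left(\prod_{i<j}(y_j-y_i)\right)^2$, i.e. the squared Vandermonde, which is nonnegative pointwise. The cleanest way to see this is to write the integral as
\be
I = \frac{1}{(k+1)!}\int\!\cdots\!\int \Big(\sum_{\sigma\in S_{k+1}}\operatorname{sgn}(\sigma)\,y_{\sigma(1)} y_{\sigma(2)}^2\cdots y_{\sigma(k)}^k\Big)\Big(\prod_{i<j}(y_j-y_i)\Big)\,d\mu(y_0)\cdots d\mu(y_k),
\ee
where I have symmetrized only the monomial factor and absorbed the sign of $\sigma$ using antisymmetry of the Vandermonde. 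The alternating sum in parentheses is exactly $\det\!\big(y_i^{\,j}\big)_{i,j}=\prod_{i<j}(y_j-y_i)$, so the integrand becomes $\tfrac{1}{(k+1)!}\big(\prod_{i<j}(y_j-y_i)\big)^2\,d\mu^{\otimes(k+1)}$, a nonnegative quantity.

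The only remaining point is strict positivity, and this is where the hypothesis on $\mu$ enters: the squared Vandermonde vanishes precisely when two of the $y_i$ coincide, so the integrand is strictly positive on the open set where the $y_i$ are pairwise distinct. Since $\mu$ is not supported on fewer than $k+1$ points, the product measure $\mu^{\otimes(k+1)}$ assigns positive mass to the set of $(k+1)$-tuples with pairwise distinct coordinates — indeed one can choose $k+1$ disjoint arcs each carrying positive $\mu$-mass and restrict to the product of these — so the integral of a strictly positive function over a set of positive measure is strictly positive. I expect the symmetrization identity to be the conceptual crux; verifying that the support condition forces positive mass on the distinct-coordinate locus is the main technical step, but it is routine once the squared-Vandermonde form is in hand.
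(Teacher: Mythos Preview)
Your proposal is correct and is essentially the same argument as the paper's: symmetrize over permutations so that the monomial factor becomes the Vandermonde determinant, yielding the integral of $\tfrac{1}{(k+1)!}\prod_{i<j}(y_j-y_i)^2$ against $\mu^{\otimes(k+1)}$. Your treatment of strict positivity (finding $k+1$ disjoint intervals of positive mass) is more explicit than the paper's, which simply asserts the result is ``manifestly positive,'' but the core idea is identical.
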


\begin{proof} Sum over all permutations of $y_0,\dots,y_k$ to give
$$\int_a^b\dots\int_a^b d\mu(y_0)\dots d\mu(y_k)\left(\sum_{\sigma} (-1)^{\sigma}y_{\sigma(0)}^0y_{\sigma(1)}^1\dots y_{\sigma(k)}^k\right)\prod_{i<j}(y_j-y_i).$$
The factor in parentheses is another copy of $\prod_{i<j}(y_j-y_i)$, so we get
$$\int_a^b\dots\int_a^b d\mu(y_0)\dots d\mu(y_k)\prod_{i<j}(y_j-y_i)^2$$
which is manifestly positive.
\end{proof}

\section{Upper-triangular matrices}\label{positivity}

The integer Heisenberg group $H$ is the group of upper triangular $3\times 3$ matrices 
$$M=\begin{pmatrix}1&a&b\\0&1&c\\0&0&1\end{pmatrix}$$ with $a,b,c\in\Z.$ 
It is generated by matrices 
$$M_0=\begin{pmatrix}1&1&0\\0&1&0\\0&0&1\end{pmatrix},~~~M_1=\begin{pmatrix}1&0&0\\0&1&1\\0&0&1\end{pmatrix}.$$
Given a binary sequence $X$ there is a corresponding matrix $M_X$ which is a positive word in the generators,
for example $M_{01101}=M_0M_1M_1M_0M_1=\begin{pmatrix}1&2&4\\0&1&3\\0&0&1\end{pmatrix}.$
The superdiagonal entries of $M_X$ are the number of $0$s and $1$s,
respectively, in $X$. 
The upper-right entry is the number of $01$-patterns in $X$. 

More generally we can take a generalized Heisenberg group generated by larger matrices, where $M_0$ has certain superdiagonal entries $1$
and $M_1$ has the complementary superdiagonal entries equal to $1$. For example if we take
$$M_0=\begin{pmatrix}1&1&0&0\\0&1&0&0\\0&0&1&1\\0&0&0&1\end{pmatrix},~~~M_1=\begin{pmatrix}1&0&0&0\\0&1&1&0\\0&0&1&0\\0&0&0&1\end{pmatrix},$$
then in this case 
\be\label{MX}M_X=\begin{pmatrix}1&N_0&N_{01}&N_{010}\\0&1&N_1&N_{10}\\0&0&1&N_0\\0&0&0&1\end{pmatrix},\ee
where $N_\tau=N_\tau(X)$.
In general, if the superdiagonal entries of $M_1$ are $a_1,a_2,\dots,$ where $a_i\in\{0,1\}$ then the entries on the first row
of $M_X$ are $1, N_{a_1}, N_{a_1a_2}, N_{a_1a_2a_3},\dots.$
Thus any pattern-counting function $N_w(X)$ appears as the upper-right entry in some such matrix product.

The optimal pattern-counting problem for a pattern $w$  can be phrased as finding the set of shortest geodesics in the appropriate Heisenberg group from the identity to the subset of matrices whose upper right entry is $N_w$. (For 
a related result over the field ${\mathbb F}_p$, see \cite{DH}.)
 
The matrices $M_X$ are totally nonnegative: all minors are nonnegative. 
One way to see this is through the LGV lemma \cite{Lindstrom, GV},
see Figure \ref{LGV}. 
\begin{figure}[htbp]
\begin{center}
\includegraphics[width=8cm]{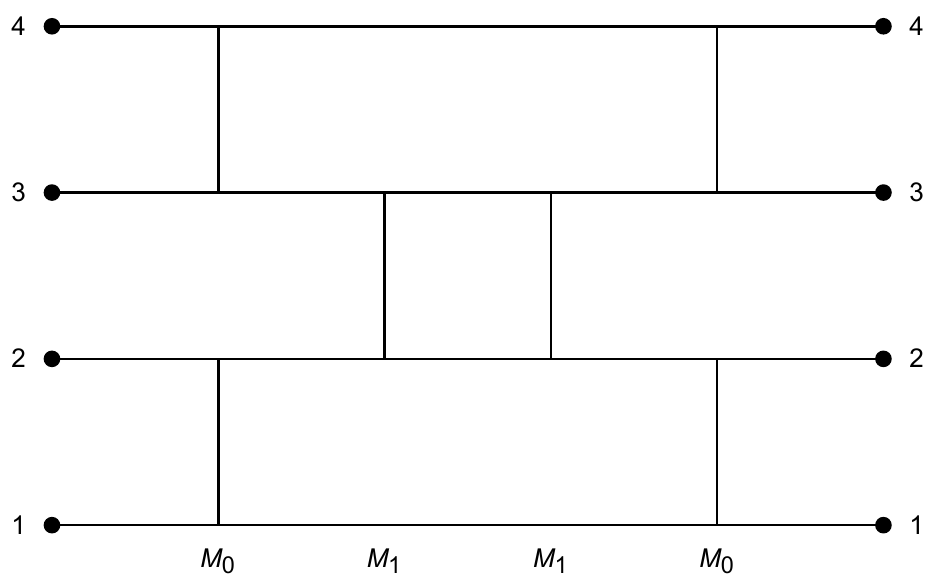}
\caption{\label{LGV}A non-intersecting path model for the determinant of a minor of $M_{0110}$.
Note that the number of NE paths from the vertex labeled $1$ on the left to the vertices labeled $2,3,4$ on the right are, respectively,
$N_0,N_{01},N_{010}$. From vertex $2$ on the left to vertices $2,3,4$ on the right, the numbers are $1,N_1,N_{10}$ and from vertex $3$ the number are
$0,1,N_0$. So triples of nonintersecting NE paths from $1,2,3$ on the left to $2,3,4$ on the right are counted by the 
upper right $3X3$ minor of (\ref{MX}) when $X=0110$.}
\end{center}
\end{figure} 

This positivity implies certain restrictions on $E_W$ for general sets of patterns $W$. 
However at the moment we have not been able to usefully apply this idea to compute $E_W$ for interesting
sets $W$. 

\section{Open questions}

\begin{enumerate}
\item If, instead of binary sequences, we consider a $k$-letter alphabet, are there any new phenomena?

\item Is the entropy an analytic function on the interior of the feasibility set $E_W$? Is the entropy-maximizing
sublebesgue measure always unique on the interior of $E_W$?

\item Is the feasibility set $E_W$ always a semi-algebraic set for any $W$?
\end{enumerate}

\section{Appendix}
We sketch here derivations of the $C_\tau$ values mentioned in the table.

\subsection{$1^k0^l1^m$}
For $\tau=1^k0^l1^m$, to maximize $N_{\tau}$ for fixed $N_1$, it is optimal to have all the $0$s be consecutive;
the limiting density 
$f$ must therefore be a step function of the form
$$f(x) = \begin{cases}1&x<a\\0&a<x<b\\1&b<x<1\end{cases}$$
for some $a$ where $b-a=1-\rho_1$. The density $\rho_\tau$ is then
$$\rho_\tau=(k+l+m)!\frac{a^k}{k!}\frac{(1-\rho_1)^l}{l!}\frac{(\rho_1-a)^m}{m!}.$$ This is maximized when $a=\frac{k\rho_1}{k+m}$.
Plugging in gives the result.

\subsection{$11010$}
For $\tau=11010$, using the method of proof of Theorem \ref{1010thm} leads to 
$$C_\tau:=5!\max_g\int_{0<x<y<1}x^2(y-x)g(x)g(y)\,dx\,dy.$$
As in the $1010$ case we look for $g$ being $0$ on an interval $[0,b]$, 
having a point mass at $1$, and analytic in between.
Considering perturbations $g\to g+\eps\delta'_a$ gives Euler-Lagrange equation
\be\label{EL11010}0=\int_0^a x^2 g(x)\,dx + \int_a^1a(2y-3a)g(y)\,dy.\ee
Differentiating three times with respect to $a$ yields
\be\label{d2}2a^2 g''(a)+12a g'(a)+14 g(a)=0.\ee  
This yields the form of the analytic part of $g$, 
$$g(a) = c a^{-5/2}\cos(\frac{\pi}3+\frac{\sqrt{3}}2\log a)$$ for a constant $c$ (where we eliminated one constant of integration using
the \emph{second} derivative of (\ref{EL11010})).
The remainder of the calculation is similar to that 
in the proof of Theorem \ref{1010thm}. 

\subsection{$10110$} As in the $11010$ case except the Euler-Lagrange equation is 
$$0=2\int_0^a x(a-x) g(x)\,dx + \int_a^1(y-a)(y-3a)g(y)\,dy.$$
Differentiating $3$ times with respect to $a$ leads to $g(a)=0$. In this case however we can and will 
have a point mass in the interior, so we expect $g=c_1\delta_b+c_2\delta_1$. This implies that $f$ is a $\{0,1\}$-valued step
function, with $4$ steps.

\subsection{$10101$}
As in the $11010$ case except the Euler-Lagrange equation is 
$$0=2\int_0^a x(1-2a+x) g(x)\,dx + \int_a^1(1-y)(y-2a)g(y)\,dy.$$
Differentiating twice with respect to $a$ leads to $2a(1-a)g'(a)+(4-8a)g(a)=0$,
with solution $g(a) = \frac{c}{a^2(1-a)^2}$ for a constant $c$.
As in the $1010$ case we can also expect $g$ to be zero on an interval $[0,b]$ and, symmetrically, $[1-b,1]$.
Thus we are led to 
$$g(x) = \begin{cases}0&0<x<b\\\frac{c}{x^2(1-x)^2}&b<x<1-b\\
0&1-b<x<1.\end{cases}$$
where $c=c(b)$ is determined by the fact that $\int_0^1g(x)dx=1$. 
Integrating and optimizing over $b$ leads to $b=\xi/(1+\xi)$ and the given value for $C_\tau$.

\old{

\section{A 3D domain}
Consider the domain $E_{1,110,011}$.
We have $\rho_{110}+\rho_{011}+\rho_{101}=3\rho_1^2(1-\rho_1),$ so for fixed $\rho_1$ and $\rho_{011}$, the value of $\rho_{110}$ is maximized when $\rho_{101}=0$, that is, when $f$ is a step function with values $0,1,0$, and 
$\rho_{110}\le 3\rho_1^2(1-\rho_1)-\rho_{011}.$
The value of $\rho_{110}$ is minimized when $f$ is a step function with values $1,0,1,$
giving $\rho_{110}\ge $
See Figure \ref{E1.110.011}.
Consequently the region $E_{110,011,101}$  is $$E_{110,011,101}=\{0\le\rho_{110}+\rho_{101}+\rho_{011}\le \frac49\}.$$
}

\bibliographystyle{plain}
\bibliography{seq}

\end{document}